\DeclareFontFamily{T1}{pzc}{}
\DeclareFontShape{T1}{pzc}{m}{it}{1.8 <-> pzcmi8t}{}
\DeclareMathAlphabet{\mathpzc}{T1}{pzc}{m}{it}
\theoremstyle{plain}
\newtheorem{prop}{Proposition}[section]
\newtheorem{lem}[prop]{Lemma}
\newtheorem{thm}[prop]{Theorem}
\newtheorem{theorem}[prop]{Theorem}
\newtheorem{corollary}[prop]{Corollary}
\theoremstyle{definition}
\newtheorem{defn}[prop]{Definition}
\theoremstyle{definition}
\newtheorem{definition}[prop]{Definition}
\newtheorem{remark}[prop]{Remark}
\numberwithin{equation}{section}
\DeclareMathOperator{\Dom}{Dom}              
\newcommand{\vertiii}[1]{{\left\vert\kern-0.25ex\left\vert\kern-0.25ex\left\vert #1
    \right\vert\kern-0.25ex\right\vert\kern-0.25ex\right\vert}}
\newcommand{\Coo}{C^\infty}                  
\newbox\ncintdbox \newbox\ncinttbox 
\newcommand{\A}{\mathcal{A}}                 
\newcommand{\C}{\mathbb{C}}                  
\renewcommand{\H}{\mathcal{H}}               
\newcommand{\hookto}{\hookrightarrow}        
\newcommand{\N}{\mathbb{N}}                  
\newcommand{\R}{\mathbb{R}}                  
\renewcommand{\SS}{\mathcal{S}}              
\newcommand{\Z}{\mathbb{Z}}                  
\newcommand{\al}{\alpha}          
\newcommand{\bt}{\beta}           
\def\<#1|#2>{\langle#1\stroke#2\rangle} 
\def\?#1|#2?{\{#1\stroke#2\}}        
\def\<#1,#2>{\langle#1,#2\rangle}            
\def\ee_#1{e_{{\scriptscriptstyle#1}}}       
\def\wick:#1:{\mathopen:#1\mathclose:}       
\newbox\ncintdbox \newbox\ncinttbox 
\newcommand{\stroke}{\mathbin|}   
\newcommand{\SU}{SU}       
\newcommand{\be}{\begin{equation}}
\renewcommand{\ee}{\end{equation}}
\newcommand{\bea}{\begin{eqnarray}}
\newcommand{\eea}{\end{eqnarray}}
\newcommand{\bean}{\begin{eqnarray*}}
	\newcommand{\eean}{\end{eqnarray*}}
\newcommand{\brray}{\begin{array}}
	\newcommand{\erray}{\end{array}}
\title{Unoriented Spectral Triples}
\begin{document}
\maketitle  \setlength{\parindent}{0pt}
\begin{center}
\author{
{\textbf{Petr Ivankov*}\\
e-mail: * monster.ivankov@gmail.com \\
}
}
\end{center}

\vspace{1 in}

\noindent

\paragraph{}	
Any  oriented  Riemannian manifold with a Spin-structure defines a spectral triple, so the spectral triple can be regarded as a noncommutative Spin-manifold. Otherwise for any unoriented  Riemannian manifold there is the two-fold covering by oriented  Riemannian manifold. Moreover there are noncommutative generalizations of finite-fold coverings. This circumstances yield a notion of  unoriented spectral triple which is covered by oriented one. 


\section{Preliminaries}

\paragraph*{}
Gelfand-Na\u{\i}mark theorem \cite{arveson:c_alg_invt} states the correspondence between  locally compact Hausdorff topological spaces and commutative $C^*$-algebras. 
\begin{thm}\label{gelfand-naimark}\cite{arveson:c_alg_invt} (Gelfand-Na\u{i}mark). 
	Let $A$ be a commutative $C^*$-algebra and let $\mathcal{X}$ be the spectrum of A. There is the natural $*$-isomorphism $\gamma:A \to C_0(\mathcal{X})$.
\end{thm}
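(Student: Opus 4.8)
The plan is to exhibit $\gamma$ explicitly as the \emph{Gelfand transform} and then verify, in turn, that it is a $*$-homomorphism, an isometry, and surjective. Recall that the spectrum $\mathcal{X}$ is the set of nonzero characters $\chi\colon A\to\mathbb{C}$ (equivalently, the $*$-homomorphisms onto $\mathbb{C}$), endowed with the weak-$*$ topology it inherits as a subset of the dual $A^*$; with this topology $\mathcal{X}$ is locally compact and Hausdorff, and it is compact exactly when $A$ is unital. For $a\in A$ I set $\gamma(a)=\hat a$, where $\hat a(\chi):=\chi(a)$. Continuity of $\hat a$ is built into the weak-$*$ topology, and that $\gamma$ is linear and multiplicative simply records that each $\chi$ is. To see $\hat a\in C_0(\mathcal{X})$ in the non-unital case I would pass to the unitization $\widetilde A$, whose spectrum is the one-point compactification $\mathcal{X}\cup\{\infty\}$, and restrict.

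The first substantive point is that $\gamma$ respects the involution, i.e. $\chi(a^*)=\overline{\chi(a)}$; writing $a=x+iy$ with $x,y$ self-adjoint reduces this to showing $\chi(x)\in\mathbb{R}$ for $x=x^*$. Here I would use that $u_t:=\exp(itx)$ is unitary for every real $t$, so $|\chi(u_t)|=1$; since $\chi(u_t)=\exp(it\,\chi(x))$ has modulus $\exp(-t\,\mathrm{Im}\,\chi(x))$, boundedness in $t$ forces $\mathrm{Im}\,\chi(x)=0$.

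The crux is the isometry $\|\hat a\|_\infty=\|a\|$. Because $A$ is commutative every element is normal, and the $C^*$-identity yields $\|a^2\|=\|a\|^2$ for normal $a$ (expand $\|a^2\|^2=\|a^{*2}a^2\|=\|(a^*a)^2\|=\|a^*a\|^2=\|a\|^4$); iterating gives $\|a^{2^n}\|=\|a\|^{2^n}$, whence the spectral radius satisfies $r(a)=\lim_n\|a^n\|^{1/n}=\|a\|$. Combining this with the basic Gelfand fact that $\sigma(a)=\{\chi(a):\chi\in\mathcal{X}\}$ (up to the point $0$ in the non-unital case), the range of $\hat a$ is the spectrum of $a$, so $\|\hat a\|_\infty=\sup_\chi|\chi(a)|=r(a)=\|a\|$. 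Isometry makes $\gamma$ injective and forces its range to be closed.

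It remains to prove surjectivity. The range $\gamma(A)$ is a subalgebra of $C_0(\mathcal{X})$ that is closed under complex conjugation (by the $*$-preserving step), separates the points of $\mathcal{X}$ (two distinct characters disagree on some $a$), and vanishes nowhere (for each $\chi$ some $\hat a$ is nonzero at $\chi$). The locally compact Stone--Weierstrass theorem then shows $\gamma(A)$ is dense in $C_0(\mathcal{X})$, and since it is already closed it must be all of $C_0(\mathcal{X})$. I expect the isometry to be the main obstacle, as it is exactly the place where the $C^*$-identity is indispensable --- without it the Gelfand transform need only be a norm-decreasing homomorphism --- and it is what upgrades density into a genuine isomorphism; the non-unital bookkeeping through the unitization is the secondary technical nuisance.
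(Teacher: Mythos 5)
The paper does not prove this statement at all: it is quoted verbatim as a classical result with a citation to Arveson, so there is no in-paper argument to compare against. Your proof is the standard textbook proof of the commutative Gelfand--Na\u{\i}mark theorem --- Gelfand transform, reality of characters on self-adjoint elements via unitaries $\exp(itx)$, isometry from the $C^*$-identity and the spectral radius formula, surjectivity from Stone--Weierstrass plus closedness of the range --- and it is correct. The only steps worth flagging as implicitly used are the automatic continuity (indeed contractivity) of characters on a Banach algebra, needed both to justify $\chi(\exp(itx))=\exp(it\,\chi(x))$ and to identify the range of $\hat a$ with $\sigma(a)\setminus\{0\}$, and the weak-$*$ local compactness of $\mathcal{X}$; both are standard and your appeal to the unitization handles the non-unital bookkeeping correctly.
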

So any (noncommutative) $C^*$-algebra may be regarded as a generalized (noncommutative)  locally compact Hausdorff topological space. 
\subsection{Quantization of finite-fold coverings}
\paragraph*{}
Articles \cite{ivankov:qnc,pavlov_troisky:cov} contain noncommutative analogs of coverings of compact and noncompact spaces.
\begin{theorem}\label{pavlov_troisky_thm}\cite{pavlov_troisky:cov}
	Suppose $\mathcal X$ and $\mathcal Y$ are compact Hausdorff connected spaces and $p :\mathcal  Y \to \mathcal X$
	is a continuous surjection. If $C(\mathcal Y )$ is a projective finitely generated Hilbert module over
	$C(\mathcal X)$ with respect to the action
	\begin{equation*}
	(f\xi)(y) = f(y)\xi(p(y)), ~ f \in  C(\mathcal Y ), ~ \xi \in  C(\mathcal X),
	\end{equation*}
	then $p$ is a finite-fold  covering.
\end{theorem}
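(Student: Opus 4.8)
\emph{Proof sketch.} Write $I_x = \{\,a \in C(\mathcal X) : a(x) = 0\,\}$ for the maximal ideal at $x \in \mathcal X$, and regard $C(\mathcal Y)$ as a right Hilbert $C(\mathcal X)$-module under the stated action, so that $C(\mathcal X)$ acts through the pullback $p^*$. The plan is first to pin down the fibres of $p$ set-theoretically from the module structure, and then to promote the resulting constant finite fibre cardinality to genuine local triviality; the latter is where projectivity, and not merely finite generation, is essential.

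First I would identify the module fibre $C(\mathcal Y)/\overline{I_x\,C(\mathcal Y)}$ at $x$ with $C(p^{-1}(x))$. The inclusion $\overline{I_x\,C(\mathcal Y)}\subseteq \{\,g : g|_{p^{-1}(x)} = 0\,\}$ is immediate, since $(g\,p^*a)(y)=g(y)\,a(p(y))$ vanishes on $p^{-1}(x)$ whenever $a(x)=0$; the reverse inclusion follows by a routine Urysohn/Tietze approximation, because for $g$ vanishing on $p^{-1}(x)$ the compact set $\{|g|\ge\varepsilon\}$ has image missing $x$, so multiplying $g$ by a suitable $p^*a$ with $a(x)=0$ alters it by less than $\varepsilon$. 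Hence restriction gives $C(\mathcal Y)/\overline{I_x C(\mathcal Y)}\cong C(p^{-1}(x))$. Since $C(\mathcal Y)$ is finitely generated projective over $C(\mathcal X)$, this fibre is finite-dimensional with dimension a continuous integer-valued function of $x$, hence constant equal to some $n$ on the connected space $\mathcal X$. Therefore $p^{-1}(x)$ is finite with exactly $n$ points for every $x$.

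Next I would establish local triviality near a fixed $x_0$, writing $p^{-1}(x_0)=\{y_1,\dots,y_n\}$. As $p$ is closed (continuous, $\mathcal Y$ compact, $\mathcal X$ Hausdorff), choosing disjoint open $V_k\ni y_k$ and setting $U=\mathcal X\setminus p(\mathcal Y\setminus\bigcup_k V_k)$ yields an open neighbourhood of $x_0$ with $p^{-1}(U)\subseteq\bigsqcup_k V_k$. The decisive step is to produce, over a possibly smaller $U$, honest idempotents separating the sheets, by lifting the fibrewise minimal idempotents. Choose $\tilde e_k\in C(\mathcal Y)$ restricting on $p^{-1}(x_0)$ to the indicator of $y_k$. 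Then $\tilde e_k^2-\tilde e_k$ vanishes on $p^{-1}(x_0)$, so on $p^{-1}(U)$ the spectrum of $\tilde e_k$ clusters near $\{0,1\}$ once $U$ is small; holomorphic functional calculus then replaces $\tilde e_k|_{p^{-1}(U)}$ by a genuine idempotent $E_k\in C(p^{-1}(U))$, that is, the indicator of a clopen set $\mathcal Y_k\subseteq p^{-1}(U)$.

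Finally I would check that, after shrinking $U$, the $E_k$ are mutually orthogonal, sum to $1$, and are fibrewise minimal. Indeed each product $E_kE_l$ with $k\ne l$, and also $1-\sum_k E_k$, is a $\{0,1\}$-valued idempotent of sup-norm $<1$ near $x_0$, hence identically $0$; and the fibrewise rank of $E_k$ is a continuous integer equal to $1$ at $x_0$, hence $1$ throughout $U$. Thus $p^{-1}(U)=\bigsqcup_{k=1}^n\mathcal Y_k$ with each $p:\mathcal Y_k\to U$ a continuous bijection, and so a homeomorphism by compactness, exhibiting $p$ as an $n$-fold covering. The main obstacle is exactly this last local-triviality argument: a bare continuous surjection with constant finite fibres need not be a covering, and it is the projectivity of the module --- equivalently, the local triviality of the associated field of fibre algebras $C(p^{-1}(x))\cong\mathbb C^{\,n}$ --- that permits lifting the minimal idempotents and thereby rules out such pathologies.
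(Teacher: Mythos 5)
The paper itself offers no proof of this statement: it is imported verbatim from \cite{pavlov_troisky:cov} and used only as motivation for Definition \ref{fin_def_uni}, so there is no internal argument to compare yours against. Judged on its own, your proof is essentially correct and follows the natural route: identify $C(\mathcal Y)/\overline{I_x C(\mathcal Y)}\cong C(p^{-1}(x))$, use Serre--Swan plus connectedness to get constant finite fibre cardinality $n$, separate the sheets over a small $U$ using that $p$ is closed, and then use projectivity a second time to force exactly one point of each fibre into each sheet. Two places deserve more care than your sketch gives them. First, the ``fibrewise rank of $E_k$ is a continuous integer'' claim is the load-bearing step: to justify it you should extend $E_k$ (by Tietze, after shrinking to $\overline{U'}\subset U$) to some $f_k\in C(\mathcal Y)$, observe that multiplication by $f_k$ is a $C(\mathcal X)$-module endomorphism of the finitely generated projective module $C(\mathcal Y)$, hence a continuous section of $\End(E)$ for the rank-$n$ bundle $E$ with $E_x=C(p^{-1}(x))$, and that over $U'$ this section is idempotent-valued, so its rank $|W_k\cap p^{-1}(x)|$ is locally constant and equals $1$ at $x_0$. (Incidentally, the holomorphic functional calculus detour is unnecessary: once $p^{-1}(U)\subseteq\bigsqcup_k V_k$, each $W_k=V_k\cap p^{-1}(U)$ is already clopen in $p^{-1}(U)$ and $E_k$ can be taken to be its indicator function.) Second, ``homeomorphism by compactness'' is loose, since $W_k$ is not compact; the correct statement is that $W_k$ is closed in $p^{-1}(U)$ and $p$ is a closed map, so $p|_{W_k}\colon W_k\to U$ is a closed continuous bijection, hence a homeomorphism. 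With these repairs the argument is complete.
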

\begin{definition}
	If $A$ is a $C^*$- algebra then an action of a group $G$ is said to be {\it involutive } if $ga^* = \left(ga\right)^*$ for any $a \in A$ and $g\in G$. The action is said to be \textit{non-degenerated} if for any nontrivial $g \in G$ there is $a \in A$ such that $ga\neq a$. 
\end{definition}
\begin{definition}\label{fin_def_uni}
	Let $A \hookto \widetilde{A}$ be an injective *-homomorphism of unital $C^*$-algebras. Suppose that there is a non-degenerated involutive action $G \times \widetilde{A} \to \widetilde{A}$ of a finite group $G$, such that $A = \widetilde{A}^G\stackrel{\text{def}}{=}\left\{a\in \widetilde{A}~|~ a = g a;~ \forall g \in G\right\}$. There is an $A$-valued product on $\widetilde{A}$ given by
	\begin{equation}\label{finite_hilb_mod_prod_eqn}
	\left\langle a, b \right\rangle_{\widetilde{A}}=\sum_{g \in G} g\left( a^* b\right) 
	\end{equation}
	and $\widetilde{A}$ is an $A$-Hilbert module. We say that a triple $\left(A, \widetilde{A}, G \right)$ is an \textit{unital noncommutative finite-fold  covering} if $\widetilde{A}$ is a finitely generated projective $A$-Hilbert module.
\end{definition}
\begin{remark}
	Above definition is motivated by the Theorem \ref{pavlov_troisky_thm}.
\end{remark}
\begin{remark}\label{kasparov_stab_thm}
	From the Kasparov stabilization theorem \cite{blackadar:ko} it follows that any finitely generated $C^*$-Hilbert module is projective.  It follows that  "finitely generated projective" can be replaced with "finitely generated" in the Definition \ref{fin_def_uni}.
\end{remark}

\subsection{Spectral triples}

\paragraph{}
This section contains citations of  \cite{hajac:toknotes}. 
\begin{defn}
	\label{df:spec-triple}\cite{hajac:toknotes}
	An {\it unital (oriented) spectral triple} $(\A, \H, D)$ consists of:
	\begin{itemize}
		\item
		a pre-$C^*$-algebra $\A$ with an involution $a \mapsto a^*$, equipped
		with a faithful representation on:
		\item
		a \emph{Hilbert space} $\H$; and also
		\item
		a \emph{selfadjoint operator} $D$ on $\mathcal{H}$, with dense domain
		$\Dom D \subset \H$, such that $a(\Dom D) \subseteq \Dom D$ for all 
		$a \in \mathcal{A}$.
	\end{itemize}
\end{defn}
There is a set of axioms for  spectral triples described in \cite{hajac:toknotes,varilly:noncom}. 

\section{Main Definition}\label{main_defn_sec}
\paragraph*{}
Let $M$ be an unoriented  Riemannian manifold, and let $\widetilde{M} \to M$ be a two-fold covering by oriented  Riemannian manifold $\widetilde{M}$ with Spin-structure. There is an action of $\Z_2 \times \widetilde{M} \to \widetilde{M}$ such that $M \cong \widetilde{M}/ \Z_2$. Let $\widetilde{\SS}$ be a Spin-bundle and $\left(\Coo\left( \widetilde{M}\right) , L^2\left(\widetilde{M}, \widetilde{\SS}\right), \widetilde{\slashed D}  \right)$ the spectral triple. Suppose  $g\in \Z_2$ is the unique nontrivial element and there is an $\Z_2$-equivariant action $\Z_2 \times L^2\left(\widetilde{M}, \widetilde{\SS}\right)\to L^2\left(\widetilde{M}, \widetilde{\SS}\right)$, i.e.
\be\nonumber
\begin{split}
g \left( \widetilde{a}\widetilde{\xi}\right)= \left(g \widetilde{a}\right) \left(g \widetilde{\xi}\right); ~~ \forall \widetilde{a} \in C\left(\widetilde{M} \right), ~~ \forall \widetilde{\xi} \in L^2\left(\widetilde{M}, \widetilde{\SS}\right),\\
\left( g \widetilde{\xi}, g\widetilde{\eta}\right) = \left( \widetilde{\xi}, \widetilde{\eta}\right);~~\forall \widetilde{\xi}, \widetilde{\eta} \in L^2\left(\widetilde{M}, \widetilde{\SS}\right);\\ \text{ where } \left(\cdot, \cdot \right) \text{ is the scalar product on }L^2\left(\widetilde{M}, \widetilde{\SS}\right).
\end{split}
\ee
Suppose that $\widetilde{\slashed D}$ is $\Z_2$-invariant, i.e.
$$
g \left(\widetilde{\slashed D}\widetilde{\xi} \right) = \widetilde{\slashed D}\left(g \widetilde{\xi} \right); ~~  \forall \widetilde{\xi} \in \Dom \widetilde{\slashed D}. 
$$
Denote by
\be\nonumber
\begin{split}
L^2\left(\widetilde{M}, \widetilde{\SS}\right)^{\Z_2}= \left\{\widetilde{\xi} \in L^2\left(\widetilde{M}, \widetilde{\SS}\right)~|~g\widetilde{\xi} = \widetilde{\xi}\right\},\\
\slashed D = \widetilde{\slashed D}|_{L^2\left(\widetilde{M}, \widetilde{\SS}\right)^{\Z_2}}.
\end{split}
\ee
The  Riemannian manifold $M$ is unoriented , it is reasonable to say that $$\left(\Coo\left(M \right), L^2\left(\widetilde{M}, \widetilde{\SS}\right)^{\Z_2}, \slashed D  \right)$$ is an unoriented spectral triple.  Following definition is motivated by the above construction.
\begin{definition}\label{unoriented_defn}
	Denote by $g \in \Z_2$ the unique nontrivial element.
An {\it unoriented spectral triple} $(\A, \H, D)$ consists of:
\begin{enumerate}
	\item
	a pre-$C^*$-algebra $\A$ with an involution $a \mapsto a^*$, equipped
	with a faithful representation on:
	\item
	a \emph{Hilbert space} $\H$; and also
	\item
	a \emph{selfadjoint operator} $D$ on $\mathcal{H}$, with dense domain
	$\Dom D \subset \H$, such that $a(\Dom D) \subseteq \Dom D$ for all 
	$a \in \mathcal{A}$.
\item An unital oriented spectral triple $\left(\widetilde{\A}, \widetilde{\H}, \widetilde{D} \right)$ which satisfies to described in \cite{hajac:toknotes,varilly:noncom} axioms, such that following conditions hold:
\begin{enumerate}
	\item  There are  actions $\Z_2 \times \widetilde{\A} \to  \widetilde{\A} $,  $\Z_2 \times \widetilde{\H} \to  \widetilde{\H} $, such that
\be\label{main_def_1_eqn}
\begin{split}
	g \left( \widetilde{a}\widetilde{\xi}\right)= \left(g \widetilde{a}\right) \left(g \widetilde{\xi}\right); ~~ \forall \widetilde{a} \in \widetilde{\A}, ~~ \forall \widetilde{\xi} \in\widetilde{\H},\\
	\left( g \widetilde{\xi}, g\widetilde{\eta}\right) = \left( \widetilde{\xi}, \widetilde{\eta}\right);~~\forall \widetilde{\xi}, \widetilde{\eta} \in \widetilde{\H},,\\ \text{ where } \left(\cdot, \cdot \right) \text{ is the scalar product on }\widetilde{\H},
\end{split}
\ee

\be\label{main_def_2_eqn}
\begin{split}
g \left(\widetilde{ D}\widetilde{\xi} \right) = \widetilde{ D}\left(g \widetilde{\xi} \right); ~~  \forall \widetilde{\xi} \in \Dom \widetilde{ D}. 
\end{split}
\ee
\item There are isomorphisms
\be\label{main_def_3_eqn}
\begin{split}
\A \cong \widetilde{\A}^{\Z_2} \stackrel{\text{def}}{=}\left\{\widetilde{a} \in \widetilde{\A}~|~ g\widetilde{a} = \widetilde{a} \right\},\\
\H \cong \widetilde{\H}^{\Z_2}\stackrel{\text{def}}{=} \left\{\widetilde{\xi} \in \widetilde{\H}~|~ g\widetilde{\xi} = \widetilde{\xi} \right\}.
\end{split}
\ee
\item If $A$ (resp. $\widetilde{A}$) is a $C^*$-norm completion of $\A$ (resp. $\widetilde{\A}$) then the triple $\left(A, \widetilde{A}, \Z_2 \right)$ is an unital noncommutative finite-fold  covering and a following condition holds
\be\label{main_def_4_eqn}
\begin{split}
\A= A\bigcap \widetilde{\A}.
\end{split}
\ee
\item 
\be\label{main_def_5_eqn}
\begin{split}
D = \widetilde{ D}|_{\H}=\widetilde{ D}|_{\widetilde{\H}^{\Z_2}}.
\end{split}
\ee
\end{enumerate}

\end{enumerate}
\end{definition}

\section{Examples}
\subsection{Commutative unoriented spectral triples}\label{comm_sp_tr} 
\paragraph*{}
Let $M$ be an unoriented  Riemannian manifold, and let $\widetilde{M} \to M$ be a two listed covering by oriented  Riemannian manifold $\widetilde{M}$ with Spin-structure given by Spin-bundle $\SS$. From the construction of the Section \ref{main_defn_sec} it follows that there is a commutative unoriented spectral triple 
\be\label{comm_equ}
\left(\Coo\left(M \right), L^2\left(\widetilde{M}, \widetilde{\SS}\right)^{\Z_2}, \slashed D  \right).
\ee
\subsection{Quantum $SO\left(3 \right)$}
\paragraph*{} 
Denote by $g \in \Z_2$ the unique nontrivial element. There is a surjective group homomorphism
$$
\Phi : SU\left(2 \right) \to SO\left(3 \right) ,~ \ker \Phi = \Z_2 = \{\pm 1\}
$$
and the natural action of $\Z_2$ on $SU(2)$ such that
\be\label{su_q_2_z2_comm_eqn}
\begin{split}
SO\left(3 \right) \cong SU\left(2\right)/\Z_2,\\
g\begin{pmatrix} \al & -\overline{\bt}\\
\bt & \overline{\al}\end{pmatrix}= \begin{pmatrix} -\al & \overline{\bt}\\
-\bt & -\overline{\al}\end{pmatrix};~ \forall \begin{pmatrix} \al & -\overline{\bt}\\
\bt & \overline{\al}\end{pmatrix}\in SU\left(2\right).
\end{split}
\ee	
This action induces an action of $\Z_2$  on a $C^*$-algebra $C\left(SU\left(2\right) \right)$ given by
\be\nonumber
\begin{split}
g\al = -\al, ~~ g\bt = - \bt
\end{split}
\ee
where $\al, \bt$ are regarded as functions $\SU\left(2 \right) \to \C$.
Indeed 	$ SU\left(2 \right)$ is an oriented manifold, $SO\left(3 \right)$ is an unoriented one, and $ SU\left(2 \right) \to SO\left(3 \right)$ is a two-fold covering.
There is a quantum generalization of $SU\left(2 \right)$ and  we will introduce a quantum analog of $SO\left(3 \right)$.
Let $q$ be a real number such that $0<q<1$. 
A quantum group $C\left( \SU_q(2)\right) $ is the universal $C^*$-algebra algebra generated by two elements $\al$ and $\beta$ satisfying the following relations:
\begin{equation}\label{su_q_2_rel_eqn}
\begin{split}
\al^*\al + \beta^*\beta = 1, ~~ \al\al^* + q^2\beta\beta^* =1,
\\
\al\bt - q \bt\al = 0, ~~\al\bt^*-q\bt^*\al = 0,
\\
\bt^*\bt = \bt\bt^*.
\end{split}
\end{equation}
From  $C\left( SU_1\left(2 \right)\right) \approx C\left(SU\left(2 \right)  \right)$ it follows that  $C\left( \SU_q(2)\right) $ can be regarded as a noncommutative deformation of $SU(2)$. The dense pre-$C^*$-algebra $\Coo\left( SU_q(2)\right) \subset C\left( SU_q(2)\right)$ is defined in  \cite{chakraborty_pal:quantum_su_2}. 
Let $Q, S \in B\left( \ell_2\left(\N^0 \right)\right) $ be given by 
\begin{equation*}
\begin{split}
Qe_k= q^ke_k, \\
Se_k = \left\{
\begin{array}{c l}
e_{k-1} & k > 0 \\
0 & k = 0
\end{array}\right..
\end{split}
\end{equation*}
and let $R \in B\left( \ell_2\left(\Z \right)\right) $ be given by $e_k \mapsto e_{k+1}$.
There is a faithful representation \cite{woronowicz:su2}  $C\left(\SU_q\left( 2\right) \right) \to B\left(\ell_2\left(\N^0 \right) \otimes \ell_2\left(\Z \right) \right)  $ given by

\begin{equation}\label{su_q_2_repr_eqn}
\begin{split}
\al \mapsto S\sqrt{1 - Q^2} \otimes 1, \\
\bt \mapsto Q \otimes R.
\end{split}
\end{equation}
We will denote by $\A_f$ the dense $*$-subalgebra of $\Coo\left( SU_q(2)\right)$ generated
by $\alpha$ and $\beta$. 
There is a faithful state $h:C\left( SU_q(2)\right) \to \C$ given by
\be\label{su_q_2_haar_eqn}
h\left(a \right)  = \sum_{n = 0}^\infty q^{2n}\left(e_n \otimes e_0, a  e_n \otimes e_0\right) 
\ee
where $a \in C\left( SU_q(2)\right)$ and $e_0 \otimes e_n \in \ell_2\left(\N^0 \right) \otimes \ell_2\left(\Z \right)$ (cf. \cite{woronowicz:su2}).
\begin{defn}
	The state $h$ is said to be the \textit{Haar measure}.
\end{defn}

Denote by $L^2\left( C\left( SU_q\left(2\right)\right), h\right) $ the GNS space associated with the state $h$. 
The representation theory of $SU_q(2)$ is strikingly similar to its
classical counterpart.  In particular, for each $l\in\{0,\frac{1}{2},
1,\ldots\}$, there is a unique irreducible unitary representation
$t^{(l)}$ of dimension $2n+1$.  Denote by $t^{(l)}_{jk}$ the
$jk$\raisebox{.4ex}{th} entry of $t^{(l)}$. These are all elements of
$\A_f$ and they form an orthogonal basis for $L^2\left( C\left( SU_q\left(2\right)\right), h\right)$. Denote by
$e^{(l)}_{jk}$ the normalized $t^{(l)}_{jk}$'s, so that
$\{e^{(l)}_{jk}: n=0,\frac{1}{2},1,\ldots, i,j=-n,-n+1,\ldots, n\}$ is
an orthonormal basis. The definition of equivariant operators (with respect to action of quantum groups) is described in \cite{chakraborty_pal:inv_hom}. It is proven in \cite{chakraborty_pal:quantum_su_2} that any unbounded equivariant operator $\widetilde{D}$ satisfies to the following condition
\be \label{su_q_2_dirac_eqn}
\widetilde{D}: e^{(l)}_{jk}\mapsto d(l,j)e^{(l)}_{jk},
\ee
Moreover if
\be \label{su_q_2_genericd_eqn}
d(l,j)=\begin{cases}2l+1 &  l \neq j,\cr
	-(2l+1) & l=j,\end{cases}
\ee

then there is a 3-summable spectral triple 
\be \label{su_q_2_spt_eqn}
\left(\Coo\left( SU_q\left(2\right)\right), L^2\left( C\left( SU_q\left(2\right)\right), h\right), \widetilde{D} \right) 
\ee
described in \cite{chakraborty_pal:quantum_su_2}. 
According to \cite{kl-sch} (equations (4.42)-(4.44) ) following condition holds
\begin{equation}\label{su_q_2_tij_eqn}
\begin{split}
t^{\left(l \right) }_{jk}= N^l_{jk}\al^{-j-k}\bt^{k-j}p_{l + k}\left( \bt\bt^*; q^{-2\left(k-j \right) }q^{2\left(j + k \right)}~|~q^2 \right); ~~ j + k \le 0 ~\&~ k \ge j,\\ t^{\left(l \right) }_{jk}= N^l_{-j,-k}p_{l - k}\left( \bt\bt^*; q^{-2\left(k-j \right) }q^{2\left(j + k \right)}~|~q^2 \right)\bt^{k-j}\al^{*j+k}; ~~ j + k \ge 0 ~\&~ k \ge j,\\ 
t^{\left(l \right) }_{jk}= N^l_{-k,-j}p_{l - j}\left( \bt\bt^*; q^{-2\left(k-j \right) }q^{2\left(j + k \right)}~|~q^2 \right)\bt^{*j-k}\al^{*j + k}; ~~ j + k \ge 0 ~\&~ j \ge k\\ 
\end{split}
\end{equation}
where $N^l_{jk} \in \R$ for any $l, j, k$ and $p_{l - k}\left( x; q^{-2\left(k-j \right) }q^{2\left(j + k \right)}~|~q^2 \right)$ is little Jacobi polynomial (cf. \cite{kl-sch}).  There is a noncommutative analog of the action \eqref{su_q_2_z2_comm_eqn} given by
\begin{equation}\label{su_q_2_z2_ncomm_eqn}
\begin{split}
\Z_2 \times C\left( SU_q\left(2\right)\right)\to C\left( SU_q\left(2\right)\right),\\
g\al = -\al,~g \bt = -\bt.
\end{split}
\end{equation}
\begin{definition}
	Denote by
	\begin{equation}\label{su_q_2_so3_eqn}
C\left( 	SO_q\left(3 \right) \right) \stackrel{\mathrm{def}}{=} C\left( SU_q\left(2\right)\right) ^{\Z_2}\cong \left\{\widetilde{a}\in C\left( SU_q\left(2\right)\right) ,~ g\widetilde{a} = \widetilde{a} \right\}.
	\end{equation}
	The $C^*$-algebra $C\left( 	SO_q\left(3 \right)\right) $ is said to be the \textit{quantum 	$ SO\left(3 \right)$}.
\end{definition}

\begin{thm}\label{su_q_2_bas_thm}\cite{woronowicz:su2}
	Let  $q \neq 0$. The set of elements of the form
	\be\label{su_q_2_fin_eqn}
	\al^k\bt^n\bt^{*m}~~ \text{ and }~~ \al^{*k'}\bt^n\bt^{*m}
	\ee 
	where $k, m, n = 0, \dots;~k'=1, 2,\dots$ forms a basis in $\A_f$: any element
	of $\A_f$ can be written in the unique way as a finite linear combination of
	elements of \eqref{su_q_2_fin_eqn}.
\end{thm}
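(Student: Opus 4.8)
The plan is to prove the two halves of the statement separately: first that the monomials in \eqref{su_q_2_fin_eqn} span $\mathcal{A}_f$, and then that they are linearly independent. I would begin by recording the consequences of \eqref{su_q_2_rel_eqn} that serve as rewriting rules. From $\alpha\beta=q\beta\alpha$ and $\alpha\beta^*=q\beta^*\alpha$ together with their adjoints one obtains the four rules that move a $\beta$ or $\beta^*$ to the right,
\[
\beta\alpha=q^{-1}\alpha\beta,\quad \beta^*\alpha=q^{-1}\alpha\beta^*,\quad \beta\alpha^*=q\alpha^*\beta,\quad \beta^*\alpha^*=q\alpha^*\beta^*,
\]
while $\beta^*\beta=\beta\beta^*$ lets me order the $\beta$'s before the $\beta^*$'s, and the two ``sphere'' relations give $\alpha^*\alpha=1-\beta\beta^*$ and $\alpha\alpha^*=1-q^2\beta\beta^*$.

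For spanning, since $\mathcal{A}_f$ is linearly spanned by words in $\alpha,\alpha^*,\beta,\beta^*$, it suffices to reduce an arbitrary word. First I would push every $\beta$ and $\beta^*$ to the right of every $\alpha$ and $\alpha^*$ using the four commutation rules (each step only multiplies by a power of $q$) and then use $\beta^*\beta=\beta\beta^*$ to put the $\beta$-part into the form $\beta^n\beta^{*m}$. What survives on the left is a word $w$ in the two letters $\alpha,\alpha^*$. Whenever $w$ contains an adjacent pair $\alpha^*\alpha$ or $\alpha\alpha^*$ I substitute $\alpha^*\alpha=1-\beta\beta^*$ or $\alpha\alpha^*=1-q^2\beta\beta^*$; each substitution deletes two letters from $w$ and, after commuting the new $\beta\beta^*$ back to the right, yields a sum of terms whose $\alpha$-word is strictly shorter. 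Since deleting adjacent unequal letters preserves the difference between the number of $\alpha$'s and the number of $\alpha^*$'s, iterating terminates at a pure power $\alpha^k$ ($k\ge 0$) or $\alpha^{*k'}$ ($k'\ge 1$). Thus every word is a finite combination of the monomials in \eqref{su_q_2_fin_eqn}.

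For linear independence I would apply the faithful representation \eqref{su_q_2_repr_eqn} to the basis vectors $e_p\otimes e_r$ of $\ell_2(\N^0)\otimes\ell_2(\Z)$. A direct computation gives $\beta^n\beta^{*m}(e_p\otimes e_r)=q^{(n+m)p}\,e_p\otimes e_{r+n-m}$, together with $\alpha^k(e_p\otimes e_r)=c_{k,p}\,e_{p-k}\otimes e_r$ and $\alpha^{*k'}(e_p\otimes e_r)=c'_{k',p}\,e_{p+k'}\otimes e_r$, where $c_{k,p}=\prod_{j=0}^{k-1}\sqrt{1-q^{2(p-j)}}\neq 0$ for $p\ge k$ and $c'_{k',p}>0$ for all $p$. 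Now suppose a finite combination of the monomials vanishes; applying it to $e_p\otimes e_r$ and reading off the coefficient of a target $e_s\otimes e_t$, the first-index shift $s-p$ equals $-k\le 0$ for the $\alpha$-family and $+k'\ge 1$ for the $\alpha^*$-family, so it determines the family and the exponent uniquely, and the second-index shift fixes $d:=n-m$. For fixed exponent and $d$ the surviving coefficient is $c_{k,p}\sum_{n-m=d}\lambda_{k,n,m}\,q^{(n+m)p}$, which must vanish for all $p\ge k$; since $c_{k,p}\neq 0$ and the exponents $n+m$ form a strictly increasing sequence, the functions $p\mapsto q^{(n+m)p}$ are linearly independent (Vandermonde in the distinct positive numbers $q^{n+m}$), forcing all coefficients to be zero.

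The hard part is the independence half — specifically disentangling the coefficients of monomials that act with the same pair of index shifts, which is exactly where the distinctness of the powers $q^{n+m}$ and the positivity $0<q<1$ enter. Spanning is a routine terminating rewriting. Alternatively, one could obtain both halves simultaneously via the Diamond Lemma, the cost being to verify confluence of the overlap ambiguities among the rules above; the genuinely nontrivial checks are the words $\alpha^*\alpha\alpha^*$ and $\alpha\alpha^*\alpha$, and one verifies directly that reducing either by $\alpha^*\alpha=1-\beta\beta^*$ or by $\alpha\alpha^*=1-q^2\beta\beta^*$ yields the same result (namely $\alpha^*-q^2\alpha^*\beta\beta^*$ and $\alpha-\alpha\beta\beta^*$ respectively).
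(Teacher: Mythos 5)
Your proof is correct, and it is worth noting that the paper itself offers no proof of this statement: Theorem \ref{su_q_2_bas_thm} is quoted from \cite{woronowicz:su2} and used as a black box (its only role here is in the lemma showing $C(SU_q(2))$ is a finitely generated $C(SO_q(3))$-module). So there is no in-paper argument to compare against; your write-up is essentially a reconstruction of Woronowicz's appendix argument, and it is the standard one. Both halves check out: the rewriting rules you derive from \eqref{su_q_2_rel_eqn} are the right ones, the termination argument for the $\alpha,\alpha^*$-word (adjacent unequal pairs always removable, length strictly decreasing, difference of letter counts invariant) is sound, and your overlap checks $\alpha^*\alpha\alpha^*\mapsto\alpha^*-q^2\alpha^*\beta\beta^*$ and $\alpha\alpha^*\alpha\mapsto\alpha-\alpha\beta\beta^*$ are correct. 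For independence, the computation of the matrix elements in the representation \eqref{su_q_2_repr_eqn} is right (and note you do not even need faithfulness of that representation, only that it is a representation), the index-shift bookkeeping correctly separates the two families and the exponents $k,k'$ and $d=n-m$, and the Vandermonde step works because the exponents $n+m$ within a fixed $d$ are distinct and $0<q<1$. The one caveat: as stated the theorem assumes only $q\neq 0$ (Woronowicz's range $q\in[-1,1]\setminus\{0\}$), whereas your independence argument genuinely uses $0<|q|<1$ — at $q=\pm1$ the numbers $q^{n+m}$ coincide and the chosen representation degenerates. Since the paper fixes $0<q<1$ as a standing assumption, this does not affect anything here, but you should either add that hypothesis to your statement or remark that the commutative case $q=\pm1$ requires a separate (classical Peter--Weyl) argument.
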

\begin{lem}
	$C\left( SU_q\left(2\right)\right)$ is a finitely generated projective $C\left( 	SO_q\left(3 \right) \right)$ module.
\end{lem}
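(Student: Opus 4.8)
The plan is to realise $\widetilde{A} := C(SU_q(2))$ as a right Hilbert module over $A := C(SO_q(3)) = \widetilde{A}^{\Z_2}$ (see \eqref{su_q_2_so3_eqn}) via the product \eqref{finite_hilb_mod_prod_eqn} of Definition \ref{fin_def_uni}, and then to exhibit an explicit finite frame. Writing $g$ for the nontrivial element of $\Z_2$ acting by \eqref{su_q_2_z2_ncomm_eqn}, the averaging map $E = \half(1+g)$ is a faithful conditional expectation of $\widetilde{A}$ onto $A$, and for $G=\Z_2$ the inner product \eqref{finite_hilb_mod_prod_eqn} reads $\langle a,b\rangle = a^*b + g(a^*b) = 2\,E(a^*b)$. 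First I would dispatch the routine points: $\langle a,b\rangle$ is $g$-invariant, hence lies in $A$; it is right $A$-linear in $b$ and positive; and since $\|a^*a\| \le \|a^*a + g(a^*a)\| \le 2\|a^*a\|$, the module norm is equivalent to the $C^*$-norm, so $\widetilde{A}$ is complete and is a genuine Hilbert $A$-module.

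The key step is to produce a finite quasi-basis, i.e. elements $u_0,u_1,u_2 \in \widetilde{A}$ with $a = \sum_i u_i\,E(u_i^* a)$ for every $a$ (equivalently a frame $x_i = u_i/\sqrt2$ with $a=\sum_i x_i\langle x_i,a\rangle$). I claim $u_0 = 1$, $u_1 = \al$, $u_2 = q\bt$ work, and the verification is a parity computation. Every $a \in \widetilde{A}$ splits as $a = a_+ + a_-$ with $a_\pm = \half(a \pm g(a))$, where $a_+ \in A$ and $g(a_-) = -a_-$. Since $g\al = -\al$ and $g\bt=-\bt$, the elements $\al,\bt$ (and $\al^*,\bt^*$) are odd, so an odd$\times$even product dies under $E$ while an odd$\times$odd product is fixed by $E$; thus $E(a)=a_+$, $E(\al^* a) = \al^* a_-$ and $E(\bt^* a)=\bt^* a_-$. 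Substituting,
\[ \sum_{i} u_i\,E(u_i^* a) = E(a) + \al\,E(\al^* a) + q^2\bt\,E(\bt^* a) = a_+ + \al\al^* a_- + q^2\bt\bt^* a_- = a_+ + (\al\al^* + q^2\bt\bt^*)a_-. \]
The second relation of \eqref{su_q_2_rel_eqn} gives $\al\al^* + q^2\bt\bt^* = 1$, so the right-hand side is exactly $a_+ + a_- = a$. Note this holds for all $a \in \widetilde{A}$, not merely on a dense subalgebra, because $g$ and $E$ are defined on the whole $C^*$-algebra.

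With the frame in hand, finite generation and projectivity are formal. The maps $\Psi\colon \widetilde{A} \to A^3$, $\Psi(a) = (\langle x_i,a\rangle)_i$, and $\Phi\colon A^3 \to \widetilde{A}$, $\Phi(a_0,a_1,a_2) = \sum_i x_i a_i$, are adjointable $A$-module maps with $\Phi\Psi = \id_{\widetilde{A}}$ by the frame identity. Hence $e := \Psi\Phi \in \End_A(A^3) = M_3(A)$ is an idempotent and $\widetilde{A} \cong eA^3$, so $\widetilde{A}$ is a finitely generated projective $A$-module (this is also precisely the content invoked in Remark \ref{kasparov_stab_thm}). In particular $\{1,\al,q\bt\}$ generate $C(SU_q(2))$ over $C(SO_q(3))$, which proves the Lemma.

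I expect the only genuine obstacle to be locating the right frame: the argument hinges on the fortunate fact that the sphere relation $\al\al^* + q^2\bt\bt^* = 1$ is assembled from the two odd generators $\al,\bt$ and sums to the identity, which is exactly what encodes freeness of the $\Z_2$-action. If one prefers not to guess the frame, an alternative route uses Theorem \ref{su_q_2_bas_thm}: each basis monomial $\al^k\bt^n\bt^{*m}$ or $\al^{*k'}\bt^n\bt^{*m}$ of odd total degree factors as one of $\al,\al^*,\bt,\bt^*$ times an even (hence $\Z_2$-invariant) monomial, so $\{1,\al,\al^*,\bt,\bt^*\}$ generate $\A_f$ over $\A_f^{\Z_2}$; passing to completions and invoking Remark \ref{kasparov_stab_thm} again yields projectivity, though the explicit frame above is cleaner and self-contained.
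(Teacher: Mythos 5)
Your proof is correct, and your main argument takes a genuinely different route from the paper's. The paper works on the dense $*$-subalgebra $\A_f$: it uses the Woronowicz basis of Theorem \ref{su_q_2_bas_thm}, observes that each odd-degree basis monomial factors as an even monomial times one of $\al,\al^*,\bt,\bt^*$, concludes that $A_f$ is generated over $A_f^{\Z_2}$ by these elements, passes to the $C^*$-completions by a density argument, and finally invokes the Kasparov stabilization theorem (Remark \ref{kasparov_stab_thm}) to upgrade ``finitely generated'' to ``finitely generated projective.'' Your explicit frame $\{1,\al,q\bt\}$, verified via the parity decomposition $a=a_++a_-$ and the relation $\al\al^*+q^2\bt\bt^*=1$ from \eqref{su_q_2_rel_eqn}, buys three things: it works directly on the full $C^*$-algebra with no density argument; it produces the idempotent $e\in M_3(A)$ explicitly, so projectivity is constructive rather than an appeal to stabilization; and it cuts the generating set from five elements to three. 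Your closing ``alternative route'' is essentially the paper's proof verbatim. One small point in your favour: the paper states that $A_f$ is generated over $A_f^{\Z_2}$ by $\al,\al^*,\bt,\bt^*$ alone, but a module generated by odd elements over the even subalgebra consists only of odd elements, so the generator $1$ must be adjoined (as you do); this is a trivial omission in the paper, but your frame-based argument sidesteps it cleanly.
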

\begin{proof}
	If $A_f^{\Z_2}= A_f\bigcap C\left( 	SO_q\left(3 \right) \right) $ then from \eqref{su_q_2_z2_ncomm_eqn} and the Theorem \ref{su_q_2_bas_thm} it turns out that given by \eqref{su_q_2_fin_eqn} elements
\be\nonumber
\al^k\bt^n\bt^{*m}~~ \text{ and }~~ \al^{*k'}\bt^n\bt^{*m}
\ee 
with even $k+m+n$ or $k'+m+n$ is the basis of $A_f^{\Z_2}$.	If 
$$
\widetilde{a} = \al^k\bt^n\bt^{*m} \notin A_f^{\Z_2}
$$
then $k+m+n$ is odd. If $m > 0$ then
$$
\widetilde{a} = \al^k\bt^n\bt^{*m-1} \bt^* = a \bt^* \text{ where } a \in A_f^{\Z_2}.
$$
If $m = 0$ and $n > 0$ then
$$
\widetilde{a} = \al^k\bt^{n-1} \bt = a \bt \text{ where } a \in A_f^{\Z_2}
$$
If $m = 0$ and $n= 0$ then $k > 0$ and
$$
\widetilde{a} = \al^{k-1}\al = a \al \text{ where } a \in A_f^{\Z_2}.
$$
From 
$$
\widetilde{a} = \al^{*k'}\bt^n\bt^{*m} \notin A_f^{\Z_2}
$$
it follows that  $k'+m+n$ is odd. Similarly to the above proof one has
$$
\widetilde{a} = a \al\text{ or } \widetilde{a} = a \al^* \text{ or } \widetilde{a} = a \bt \text{ or } \widetilde{a} = a \bt^*   \text{ where } a \in A_f^{\Z_2}.
$$
From the above equations it turns out that $A_f$ is a left $A_f^{\Z_2}$-module generated by $\al, \al^*, \bt, \bt^*$. Algebra $A_f^{\Z_2}$ (resp. $A_f$) is dense in $ C\left( 	SO_q\left(3 \right) \right) $ (resp.  $C\left( 	SU_q\left(2 \right) \right)$ ) it follows that  $C\left( 	SU_q\left(2 \right) \right)$ is a left $C\left( 	SO_q\left(3 \right) \right)$-module generated by $\al, \al^*, \bt, \bt^*$. From the Remark \ref{kasparov_stab_thm} it turns out that $C\left( SU_q\left(2\right)\right)$ is a finitely generated projective $C\left( 	SO_q\left(3 \right) \right)$ module.

\end{proof}
\begin{corollary}
The	triple $\left(C\left( 	SO_q\left(3 \right) \right), C\left( 	SU_q\left(2 \right) \right), \Z_2 \right)$ is an unital noncommutative finite-fold  covering.
\end{corollary}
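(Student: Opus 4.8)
The plan is to verify directly the four requirements of Definition \ref{fin_def_uni} for the triple $\left(C\left( SO_q\left(3 \right) \right), C\left( SU_q\left(2 \right) \right), \Z_2 \right)$, since the hardest ingredient --- finite generation and projectivity of the module --- is already supplied by the preceding Lemma. First I would confirm that the inclusion $C\left( SO_q\left(3 \right) \right) = C\left( SU_q\left(2\right)\right)^{\Z_2} \hookto C\left( SU_q\left(2\right)\right)$ is an injective $*$-homomorphism of unital $C^*$-algebras: the fixed-point subalgebra of a $*$-automorphic action is a $C^*$-subalgebra, both algebras are unital (the unit is fixed by $g$, hence lies in the subalgebra), and the inclusion is trivially injective.

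Next I would check that the action \eqref{su_q_2_z2_ncomm_eqn} is involutive and non-degenerated. Involutivity $g\left(\widetilde a^*\right) = \left(g\widetilde a\right)^*$ holds on the generators, since $g$ multiplies $\al, \bt$ (and hence $\al^*, \bt^*$) by $-1$, an operation commuting with the involution; because $g$ is an algebra automorphism this extends to all of $C\left( SU_q\left(2\right)\right)$. Non-degeneracy is immediate from $g\al = -\al \neq \al$. The identity $C\left( SO_q\left(3 \right) \right) = C\left( SU_q\left(2\right)\right)^{\Z_2}$ is exactly the defining equation \eqref{su_q_2_so3_eqn}.

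Finally, the preceding Lemma shows that $C\left( SU_q\left(2\right)\right)$ is a finitely generated projective $C\left( SO_q\left(3 \right) \right)$-module, generated by $\al, \al^*, \bt, \bt^*$; equipped with the $C\left( SO_q\left(3 \right) \right)$-valued inner product \eqref{finite_hilb_mod_prod_eqn}, it becomes the Hilbert module demanded by Definition \ref{fin_def_uni} (finite generation already forcing projectivity via Remark \ref{kasparov_stab_thm}). With all four conditions verified the triple is an unital noncommutative finite-fold covering. The only step carrying genuine content is the module-structure claim, and that obstacle is already cleared by the Lemma; everything else reduces to unwinding definitions, so I expect no further difficulty.
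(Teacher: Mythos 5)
Your proposal is correct and matches the paper's (implicit) argument: the paper gives no separate proof of the corollary, treating it as an immediate consequence of the preceding Lemma together with Definition \ref{fin_def_uni}, which is exactly the unwinding you carry out. The only substantive ingredient is the finitely generated (hence, by Remark \ref{kasparov_stab_thm}, projective) module structure, and you correctly attribute that to the Lemma.
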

An action of $\Z_2$ on $L^2\left( C\left( SU_q\left(2\right)\right), h\right)$ is naturally induced by the action  $\Z_2$ on $\Coo\left( SU_q\left(2\right)\right) $. From the above construction it follows that the unital orientable spectral triple
\be\nonumber
\left(\Coo\left( SU_q\left(2\right)\right), L^2\left( C\left( SU_q\left(2\right)\right), h\right), \widetilde{D} \right) 
\ee
can be regarded as the triple given by condition 4 of the Definition  \ref{unoriented_defn}. Also one sees that all conditions of the the Definition  \ref{unoriented_defn} hold, so one has an unoriented spectral triple
$$
\left(\Coo\left( SO_q\left(3\right)\right), L^2\left( C\left( SU_q\left(2\right)\right), h\right)^{\Z_2},D \right) 
$$
where $\Coo\left( SO_q\left(3\right)\right)= C\left( SO_q\left(3\right)\right)\bigcap \Coo\left( SU_q\left(2\right)\right)$ and $D = \widetilde{ D}|_{L^2\left( C\left( SU_q\left(2\right)\right), h\right)^{\Z_2}}$.
\subsection{Isopectral deformations}

\subsubsection{Oriented Twisted Spectral Triples}\label{iso_ori}
\paragraph*{}A very general construction of isospectral
deformations
of noncommutative geometries is described in \cite{connes_landi:isospectral}. The construction
implies in particular that any
compact Spin-manifold $M$ whose isometry group has rank
$\geq 2$ admits a
natural one-parameter isospectral deformation to noncommutative geometries
$M_\theta$.
We let $(\Coo\left(M \right)  , \H = L^2\left(M,S \right)  , \slashed D)$ be the canonical spectral triple associated with a
compact spin-manifold $M$. We recall that $\mathcal{A} = C^\infty(M)$ is
the algebra of smooth
functions on $M$, $S$ is the spinor bundle and $\slashed D$
is the Dirac operator.
Let us assume that the group $\mathrm{Isom}(M)$ of isometries of $M$ has rank
$r\geq2$.
Then, we have an inclusion
\begin{equation}\label{isos_t_act_eqn}
\mathbb{T}^2 \subset \mathrm{Isom}(M) \, ,
\end{equation}
with $\mathbb{T}^2 = \mathbb{R}^2 / 2 \pi \mathbb{Z}^2$ the usual torus, and we let $U(s) , s \in
\mathbb{T}^2$, be
the corresponding unitary operators in $\H = L^2(M,S)$ so that by construction
\begin{equation*}
U(s) \, \slashed D = \slashed D \, U(s).
\end{equation*}
Also,
\begin{equation}\label{isospectral_sym_eqn}
U(s) \, a \, U(s)^{-1} = \alpha_s(a) \, , \, \, \, \forall \, a \in \mathcal{A} \, ,
\end{equation}
where $\alpha_s \in \mathrm{Aut}(\mathcal{A})$ is the action by isometries on the
algebra of functions on
$M$.

\noindent
We let $p = (p_1, p_2)$ be the generator of the two-parameters group $U(s)$
so that
\begin{equation*}
U(s) = \exp(i(s_1 p_1 + s_2 p_2)) \, .
\end{equation*}
The operators $p_1$ and $p_2$ commute with $D$.
Both $p_1$ and $p_2$
have integral spectrum,
\begin{equation*}
\mathrm{Spec}(p_j) \subset \mathbb{Z} \, , \, \, j = 1, 2 \, .
\end{equation*}

\noindent
One defines a bigrading of the algebra of bounded operators in $\H$ with the
operator $T$ declared to be of bidegree
$(n_1,n_2)$ when,
\begin{equation*}
\alpha_s(T) = \exp(i(s_1 n_1 + s_2 n_2)) \, T \, , \, \, \, \forall \, s \in
\mathbb{T}^2 \, ,
\end{equation*}
where $\alpha_s(T) = U(s) \, T \, U(s)^{-1}$ as in \eqref{isospectral_sym_eqn}.
\paragraph{}
Any operator $T$ of class $C^\infty$ relative to $\alpha_s$ (i. e. such that
the map $s \rightarrow \alpha_s(T) $ is of class $C^\infty$ for the
norm topology) can be uniquely
written as a doubly infinite
norm convergent sum of homogeneous elements,
\begin{equation*}
T = \sum_{n_1,n_2} \, \widehat{T}_{n_1,n_2} \, ,
\end{equation*}
with $\widehat{T}_{n_1,n_2}$ of bidegree $(n_1,n_2)$ and where the sequence
of norms $||
\widehat{T}_{n_1,n_2} ||$ is of
rapid decay in $(n_1,n_2)$.
Let $\lambda = \exp(2 \pi i \theta)$. For any operator $T$ in $\H$ of
class $C^\infty$ we define
its left twist $l(T)$ by
\begin{equation}\label{l_defn}
l(T) = \sum_{n_1,n_2} \, \widehat{T}_{n_1,n_2} \, \lambda^{n_2 p_1} \, ,
\end{equation}
and its right twist $r(T)$ by
\begin{equation*}
r(T) = \sum_{n_1,n_2} \, \widehat{T}_{n_1,n_2} \, \lambda^{n_1 p_2} \, ,
\end{equation*}
Since $|\lambda | = 1$ and $p_1$, $p_2$ are self-adjoint, both series
converge in norm. Denote by $\Coo\left(M \right)_{n_1, n_2} \subset \Coo\left(M \right) $ the $\C$-linear subspace of elements of bidegree $\left( n_1, n_2\right) $. \\
One has,
\begin{lem}\label{conn_landi_iso_lem}\cite{connes_landi:isospectral}
	\begin{itemize}
		\item[{\rm a)}] Let $x$ be a homogeneous operator of bidegree $(n_1,n_2)$
		and $y$ be
		a homogeneous operator of  bidegree $(n'_1,n'_2)$. Then,
		\begin{equation}
		l(x) \, r(y) \, - \,  r(y) \, l(x) = (x \, y \, - y \, x) \,
		\lambda^{n'_1 n_2} \lambda^{n_2 p_1 + n'_1 p_2}
		\end{equation}
		In particular, $[l(x), r(y)] = 0$ if $[x, y] = 0$.
		\item[{\rm b)}] Let $x$ and $y$ be homogeneous operators as before and
		define
		\begin{equation}
		x * y = \lambda^{n'_1 n_2} \, x y \, ; \label{star}
		\end{equation}
		then $l(x) l(y) = l(x * y)$.
	\end{itemize}
\end{lem}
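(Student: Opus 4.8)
The plan is to reduce both statements to a single commutation relation between a homogeneous operator and the unitaries $\lambda^{a p_1+b p_2}$, and then to read off the two identities by bookkeeping of scalar phases. First I would note that, since $x$ is homogeneous of bidegree $(n_1,n_2)$, the defining identity $U(s)\,x\,U(s)^{-1}=\exp(i(s_1 n_1+s_2 n_2))\,x$ rearranges, using $U(s)=\exp(i(s_1 p_1+s_2 p_2))$ and the fact that $\exp(i(s_1 n_1+s_2 n_2))$ is a scalar, into
\begin{equation*}
x\,\exp(i(s_1 p_1+s_2 p_2))=\exp\!\big(i(s_1(p_1-n_1)+s_2(p_2-n_2))\big)\,x.
\end{equation*}
Specializing $s_1=2\pi\theta a$, $s_2=2\pi\theta b$ and recalling $\lambda=\exp(2\pi i\theta)$, this becomes the key relation
\begin{equation*}
x\,\lambda^{a p_1+b p_2}=\lambda^{-a n_1-b n_2}\,\lambda^{a p_1+b p_2}\,x,
\end{equation*}
valid for all real $a,b$; the operators $\lambda^{a p_1+b p_2}$ are unitary since $|\lambda|=1$ and $p_1,p_2$ are self-adjoint, so no convergence issue intervenes. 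I would also record that a product $xy$ of homogeneous operators of bidegrees $(n_1,n_2)$ and $(n'_1,n'_2)$ is again homogeneous, of bidegree $(n_1+n'_1,n_2+n'_2)$, and that for a single homogeneous term the sums defining $l$ and $r$ collapse to $l(x)=x\,\lambda^{n_2 p_1}$ and $r(y)=y\,\lambda^{n'_1 p_2}$.

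For part b) I would expand $l(x)\,l(y)=x\,\lambda^{n_2 p_1}\,y\,\lambda^{n'_2 p_1}$ and apply the key relation with $a=n_2$, $b=0$ to move $\lambda^{n_2 p_1}$ to the right of $y$; this produces the phase $\lambda^{n'_1 n_2}$ and yields $l(x)\,l(y)=\lambda^{n'_1 n_2}\,xy\,\lambda^{(n_2+n'_2)p_1}$. Because $x*y=\lambda^{n'_1 n_2}xy$ is homogeneous of bidegree $(n_1+n'_1,n_2+n'_2)$, the right-hand side is exactly $l(x*y)$, and the claim follows.

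For part a) I would compute the two products separately. Expanding $l(x)\,r(y)=x\,\lambda^{n_2 p_1}\,y\,\lambda^{n'_1 p_2}$ and commuting $\lambda^{n_2 p_1}$ past $y$ gives $\lambda^{n'_1 n_2}\,xy\,\lambda^{n_2 p_1+n'_1 p_2}$; expanding $r(y)\,l(x)=y\,\lambda^{n'_1 p_2}\,x\,\lambda^{n_2 p_1}$ and commuting $\lambda^{n'_1 p_2}$ past $x$ (key relation with $a=0$, $b=n'_1$) gives $\lambda^{n'_1 n_2}\,yx\,\lambda^{n_2 p_1+n'_1 p_2}$. Since the two expressions carry the identical phase and the identical trailing unitary $\lambda^{n_2 p_1+n'_1 p_2}$, subtracting yields $l(x)\,r(y)-r(y)\,l(x)=(xy-yx)\,\lambda^{n'_1 n_2}\,\lambda^{n_2 p_1+n'_1 p_2}$, as asserted; the ``in particular'' statement is immediate because $[x,y]=0$ annihilates the factor $xy-yx$.

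The computation is essentially a bookkeeping exercise, so there is no deep obstacle. The only point demanding care is the direction and sign of the shift in the key relation — that is, keeping straight that carrying a homogeneous operator of bidegree $(n_1,n_2)$ through $\lambda^{a p_1+b p_2}$ multiplies it by $\lambda^{\mp(a n_1+b n_2)}$ according to the side — since a sign slip there would corrupt the phases $\lambda^{n'_1 n_2}$ appearing in both parts.
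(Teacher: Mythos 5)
Your computation is correct: the key relation $x\,\lambda^{a p_1+b p_2}=\lambda^{-an_1-bn_2}\,\lambda^{a p_1+b p_2}\,x$ follows correctly from homogeneity and $U(s)=\exp(i(s_1p_1+s_2p_2))$, and the phase bookkeeping in both parts a) and b) checks out, including the fact that the two terms in a) acquire the identical scalar $\lambda^{n'_1 n_2}$ and trailing unitary so that the difference factors as claimed. Note that the paper itself supplies no proof of this lemma --- it is quoted verbatim from Connes--Landi with only a citation --- so there is nothing to compare against; your argument is the standard one-line commutation computation from that source, and it is sound.
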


\noindent
The product $*$ defined in (\ref{star}) extends by linearity
to an associative product on the linear space of smooth operators and could
be called a $*$-product.
One could also define a deformed `right product'. If $x$ is homogeneous of
bidegree
$(n_1,n_2)$ and $y$ is homogeneous of bidegree $(n'_1,n'_2)$ the product is
defined by
\begin{equation*}
x *_{r} y = \lambda^{n_1 n'_2} \, x y \, .
\end{equation*}
Then, along the lines of the previous lemma one shows that $r(x) r(y) = r(x
*_{r} y)$.

We can now define a new spectral triple where both $\H$ and the operator
$D$ are unchanged while the
algebra $\Coo\left(M \right)$  is modified to $l(\Coo\left(M \right))$ . By
Lemma~{\ref{conn_landi_iso_lem}}~b) one checks that  $l\left( \Coo\left(M \right)\right) $ is still an algebra. Since $\slashed D$ is of bidegree $(0,0)$ one has,
\begin{equation*}
[\slashed D, \, l(a) ] = l([\slashed D, \, a]) \label{bound}
\end{equation*}
which is enough to check that $[\slashed D, x]$ is bounded for any $x \in l(\mathcal{A})$. There is an oriented twisted spectral triple 
\be\label{twisetd_eqn}
\left(l\left( \Coo\left(M \right)\right) , \H, \slashed D\right).
\ee
\subsubsection{Unoriented Twisted Spectral Triples}
\paragraph*{}
Suppose that $M$ is unoreintable manifold which satisfies to \eqref{isos_t_act_eqn}, i.e.
\begin{equation*}
\mathbb{T}^2 \subset \mathrm{Isom}(M) \, ,
\end{equation*}
Suppose that the natural 2-fold covering $\widetilde{M}\to M$ is such that $\widetilde{M}$ is a Spin-manifold so there is an oriented spectral triple
$\left( \Coo\left(\widetilde{M}\right) , L^2\left(\widetilde{M},\widetilde{S} \right), \widetilde{ \slashed D}  \right)$.
From \ref{comm_sp_tr} it turns out that there is an unoriented spectral triple given by \eqref{comm_equ}, i.e.

\be\nonumber
\left(\Coo\left(M \right), L^2\left(\widetilde{M}, \widetilde{\SS}\right)^{\Z_2}, \slashed D  \right).
\ee
Otherwise from \ref{iso_ori} there is an oriented twisted spectral triple
\be\nonumber
\left(l \Coo\left(\widetilde{M}\right) , L^2\left(\widetilde{M},\widetilde{S} \right), \widetilde{ \slashed D}  \right).
\ee.

Action of $G\left(\widetilde{M}~|~M \right) \cong \Z_2$ on $\widetilde{M}$ induces an action of $\Z_2$ on both $\Coo\left(\widetilde{M}\right) $ and $l\Coo\left(\widetilde{M}\right) $ such that
\be\nonumber
\begin{split}
	\Coo\left(M \right)= \Coo\left(\widetilde{M}\right)^{\Z_2},\\
	l\Coo\left(M \right)= l\Coo\left(\widetilde{M}\right)^{\Z_2},\\
\end{split}
\ee
From the above construction we have an unoriented twisted spectral triple
$$
\left(l\Coo\left(M \right), L^2\left(\widetilde{M}, \widetilde{\SS}\right)^{\Z_2}, \slashed D  \right).
$$

 which satisfies to the Definition \eqref{unoriented_defn}.

\section*{Acknowledgment}

\paragraph*{}
I am very grateful to Prof.  Arup Kumar Pal and Stanis\l{}aw Lech Woronowicz
for explanation of the properties of the quantum $SU\left( 2\right)$. I would like to acknowledge my son, Ph. D. Nikolay Ivankov for the discussion about this article.

\end{document}